\newtheorem{theorem}{Theorem}[section]
\newtheorem{corollary}[theorem]{Corollary}
\theoremstyle{definition}
\newtheorem{definition}[theorem]{Definition}
\theoremstyle{remark}
\newtheorem{remark}[theorem]{Remark}
\numberwithin{equation}{section}
\begin{document}
\title[On the weighted contraharmonic means]
{On the weighted contraharmonic means}

\author[A.~Zamani]
{Ali Zamani}

\address{Department of Mathematics, Farhangian University, Tehran, Iran
\&
School of Mathematics and Computer Sciences, Damghan University, P.O.BOX 36715-364, Damghan, Iran}
\email{zamani.ali85@yahoo.com}

\subjclass[2020]{46L05; 47A63; 47A64; 47B65.}
\keywords{$C^*$-algebra; positive definite; contraharmonic mean; operator inequality.}
\begin{abstract}
Let $\mathscr{A}$ be a unital $C^*$-algebra with unit $e$ and let $\nu\in(0, 1)$.
We introduce the concept of the $\nu$-weighted contraharmonic of two positive definite elements $a$ and $b$ of $\mathscr{A}$ by
\begin{align*}
{C}_{\nu}(a, b):= (1-\nu)\nu^{-1}b + \nu (1-\nu)^{-1}a - \left((1-\nu)a^{-1}+\nu b^{-1}\right)^{-1}.
\end{align*}
We show that
\begin{align*}
{C}_{\nu}(a, b)= \displaystyle{\max_{x+y=e}}\left\{(1-\nu)^{-1}\left(\nu a - x^*ax\right) + \nu^{-1}\left((1-\nu)b - y^*by\right)\right\},
\end{align*}
and then apply it to present some properties of this weighted mean.
\end{abstract}
\maketitle
\section{Introduction and Preliminaries}
The theory of (weighted) means for numbers is a classical and very well developed area in mathematical analysis
(see, e.g., \cite[Chapters II-III]{Hardy.Littlewood.Polya.1988}).
A mean of positive scalars $\alpha$ and $\beta$ may be introduced in many different ways.
One of the most important is a concept of the Gini--Beckenbach--Lehmer mean
(\cite{Beckenbach.1950, Gini.1938, Lehmer.1971}):
\begin{align*}
M_{s}(\alpha, \beta)=\frac{\alpha^s+\beta^s}{\alpha^{s-1}+\beta^{s-1}}.
\end{align*}
Notice that the harmonic mean ($H$), geometric mean ($G$),
arithmetic mean ($A$) and contraharmonic mean ($C$), which is frequently used
in this paper, can be associated with this mean, respectively,
by letting $s=0$, $s=\frac{1}{2}$, $s=1$ and $s=2$. That is,
\begin{align*}
&H(\alpha, \beta)=M_{0}(\alpha, \beta)=\frac{2\alpha\beta}{\alpha+\beta},
\\& G(\alpha, \beta)=M_{\frac{1}{2}}(\alpha, \beta)=\sqrt{\alpha\beta},
\\& A(\alpha, \beta)=M_{1}(\alpha, \beta)= \frac{\alpha+\beta}{2},
\\& C(\alpha, \beta)=M_{2}(\alpha, \beta)= \frac{\alpha^2+\beta^2}{\alpha+\beta}.
\end{align*}
Let $\mathscr{A}$ be a $C^*$-algebra.
An element $a$ of $\mathscr{A}$ is positive, in short $0\leq a$, if $a = b^*b$ for some $b\in\mathscr{A}$.
If $0\leq a$, then we denote by $a^{1/2}$ the unique positive square root of $a$.
If $a$ and $b$ are self-adjoint elements
of $\mathscr{A}$ such that $0\leq a-b$, we write $b\leq a$.
An element $a$ of $\mathscr{A}$ is also said to be positive definite if $a$ is positive and invertible.
Averaging operations are of interest in the context of von Neumann algebras and $C^*$-algebras as well,
and various notions of (weighted) means of positive definite elements have been studied
(see \cite{Kubo.Ando.1980, Molnar.2017} and the references therein).

Let $\nu\in(0, 1)$. For two positive definite elements $a$ and $b$ of $\mathscr{A}$
the ($\nu$-weighted) harmonic mean $H_{\nu}$, ($\nu$-weighted) geometric mean $G_{\nu}$ and ($\nu$-weighted) arithmetic mean $A_{\nu}$
are defined by
\begin{align*}
&H_{\nu}(a, b)=\left((1-\nu)a^{-1}+\nu b^{-1}\right)^{-1},
\\& G_{\nu}(a, b)=a^{\frac{1}{2}}\left(a^{-\frac{1}{2}}ba^{-\frac{1}{2}}\right)^{\nu}a^{\frac{1}{2}},
\\& A_{\nu}(a, b)=(1-\nu)a+\nu b.
\end{align*}
In this paper, inspired by the definition for the contraharmonic
mean of matrices \cite{Anderson.M.M.T.1987, Green.Morley.1987},
we introduce the concept of the $\nu$-weighted contraharmonic
mean in the setting of $C^*$-algebras.
We investigate some properties of this weighted mean and
prove inequalities involving it.
\section{Results}
As we have already mentioned, the contraharmonic mean of
two positive scalars is defined by the formula
\begin{align*}
C(\alpha, \beta)=\frac{\alpha^2+\beta^2}{\alpha+\beta}.
\end{align*}
This may be rewritten as
\begin{align*}
C(\alpha, \beta)
= A(2\beta, 2\alpha)-H(\alpha, \beta).
\end{align*}
This motivates the following definition.
\begin{definition}\label{d.02}
Let $\mathscr{A}$ be a unital $C^*$-algebra with unit $e$ and let $\nu\in(0, 1)$.
The $\nu$-weighted contraharmonic mean of two positive definite elements
$a$ and $b$ of $\mathscr{A}$ is defined by
\begin{align*}
{C}_{\nu}(a, b)= {A}_{\nu}\left(\nu^{-1}b, (1-\nu)^{-1}a\right) - {H}_{\nu}(a, b).
\end{align*}
\end{definition}
\begin{remark}\label{R.0004011}
In the sequel, $a, b, c$ and $d$ denote positive definite elements of a unital $C^*$-algebra $\mathscr{A}$ with unit $e$.
\end{remark}
\begin{remark}\label{R.000401}
It is easy to see that ${H}_{\nu}(a, b) \leq (1-\nu)^{-1}a$ and ${H}_{\nu}(a, b) \leq \nu^{-1}b$.
Thus, we obtain $0\leq {C}_{\nu}(a, b)$.
Also, since $0\leq {H}_{\nu}(a, b)$, we have
\begin{align}\label{R.000401.I.1}
{C}_{\nu}(a, b)\leq {A}_{\nu}\left(\nu^{-1}b, (1-\nu)^{-1}a\right).
\end{align}
\end{remark}
\begin{remark}\label{R.0004801}
The following properties of the weighted contraharmonic mean are obvious:
\begin{itemize}
\item[(i)] ${C}_{\nu}(a, b)= {C}_{1-\nu}(b, a)$.
\item[(ii)] ${C}_{\nu}(a, a) = \frac{3\nu^2-3\nu+1}{\nu-\nu^2}a$.
\item[(iii)] ${C}_{\nu}(\alpha e, \beta e) = {C}_{\nu}(\alpha, \beta)e$ for any $\alpha, \beta>0$.
\item[(iv)] ${C}_{\nu}(ra, rb) = r{C}_{\nu}(a, b)$ for any $r>0$.
\end{itemize}
\end{remark}
If $\alpha$ and $\beta$ are two positive scalars, then the contraharmonic mean $C(\alpha, \beta)$
can be stated by the solution of the following variational problem:
\begin{align*}
C(\alpha, \beta)= \displaystyle{\max_{s+t=1}}\left\{\alpha-2\alpha s^2 + \beta -2\beta t^2\right\}.
\end{align*}
Motivated by this expression for the contraharmonic mean of scalars, we establish the following theorem.
\begin{theorem}\label{T.01}
The following expression holds:
\begin{align*}
{C}_{\nu}(a, b)= \displaystyle{\max_{x+y=e}}\left\{(1-\nu)^{-1}\left(\nu a - x^*ax\right) + \nu^{-1}\left((1-\nu)b - y^*by\right)\right\}.
\end{align*}
\end{theorem}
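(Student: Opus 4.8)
The plan is to convert the maximization into a minimization and then resolve the minimization by completing the square, so that no differential calculus is needed in the noncommutative setting. First I would regroup the functional to be maximized: the expression $(1-\nu)^{-1}\left(\nu a - x^*ax\right) + \nu^{-1}\left((1-\nu)b - y^*by\right)$ splits as the constant part $\nu(1-\nu)^{-1}a + (1-\nu)\nu^{-1}b$, which is exactly $A_{\nu}\bigl(\nu^{-1}b, (1-\nu)^{-1}a\bigr)$, minus the quadratic part $G(x,y):=(1-\nu)^{-1}x^*ax + \nu^{-1}y^*by$. Hence maximizing the displayed functional over $x+y=e$ (in the Löwner order) is equivalent to minimizing $G$ over the same constraint, and by Definition \ref{d.02} it suffices to prove that $\min_{x+y=e}G(x,y)=H_{\nu}(a,b)$.

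To streamline the algebra I would set $A:=(1-\nu)^{-1}a$ and $B:=\nu^{-1}b$, both positive definite, so that $G(x,y)=x^*Ax+y^*By$; since $A^{-1}=(1-\nu)a^{-1}$ and $B^{-1}=\nu b^{-1}$, the target value is the self-adjoint parallel-sum element $z:=(A^{-1}+B^{-1})^{-1}=H_{\nu}(a,b)$. The natural candidate minimizer, suggested by the formal stationarity condition $Ax=By$, is $x_0:=A^{-1}z$, $y_0:=B^{-1}z$. These are feasible because $x_0+y_0=(A^{-1}+B^{-1})z=e$, and using $Ax_0=By_0=z=z^*$ a direct computation gives $x_0^*Ax_0+y_0^*By_0=zA^{-1}z+zB^{-1}z=z(A^{-1}+B^{-1})z=z$.

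The decisive step is to confirm that $x_0,y_0$ is a genuine minimizer via completing the square. Given any feasible pair, write $x=x_0+u$ and $y=y_0+v$; feasibility of both pairs forces $u+v=0$. Expanding $x^*Ax+y^*By$ yields three groups: the constant $z$, the cross terms, and the remainder $u^*Au+v^*Bv$. Using $Ax_0=By_0=z=z^*$, the cross terms collapse to $z(u+v)+(u+v)^*z$, which vanishes precisely because $u+v=0$. This cancellation, which is exactly where self-adjointness of the parallel sum $z$ is essential, is the one point that requires care in the noncommutative setting; it is the main obstacle, while everything else is the scalar computation transcribed verbatim. With $v=-u$ the remainder becomes $u^*(A+B)u\geq0$, so $G(x,y)=z+u^*(A+B)u\geq z$, with equality iff $u=0$ since $A+B$ is positive definite.

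Finally I would assemble the pieces: the minimum of $G$ over $x+y=e$ equals $z=H_{\nu}(a,b)$ and is attained at $x_0,y_0$. Consequently the displayed functional equals $A_{\nu}\bigl(\nu^{-1}b,(1-\nu)^{-1}a\bigr)-z-u^*(A+B)u$, so it is dominated by $A_{\nu}\bigl(\nu^{-1}b,(1-\nu)^{-1}a\bigr)-H_{\nu}(a,b)=C_{\nu}(a,b)$ in the operator order, with equality attained when $u=0$. This is precisely the asserted maximum.
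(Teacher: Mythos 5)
Your proof is correct, and it reaches the same optimizer as the paper but by a genuinely cleaner route. After simplification your candidate $x_0=A^{-1}(A^{-1}+B^{-1})^{-1}$ with $A=(1-\nu)^{-1}a$, $B=\nu^{-1}b$ is exactly the pair $z=\nu^{-1}(1-\nu)\left(a+\nu^{-1}(1-\nu)b\right)^{-1}b$, $w=\left(a+\nu^{-1}(1-\nu)b\right)^{-1}a$ that the paper plugs in to get the lower bound on the maximum; where you diverge is in certifying the upper bound. The paper builds an auxiliary element $h$ out of $a^{\frac{1}{2}}$ and $a^{-\frac{1}{2}}ba^{-\frac{1}{2}}$, verifies the rather delicate identity \eqref{T.01.I.1.2}, and adds the positive element $a^{\frac{1}{2}}h^*ha^{\frac{1}{2}}$ to the functional; you instead recognize the target as the parallel sum $(A^{-1}+B^{-1})^{-1}$ and complete the square around the minimizer, so that the deficit of an arbitrary feasible pair $(x_0+u,\,y_0-u)$ is exactly $u^*(A+B)u\geq 0$. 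Your version needs no half-powers and no computational identity, makes visible the one place where self-adjointness of the parallel sum is used (the cancellation of the cross terms), and yields as a bonus that the maximizer is unique, since $A+B$ is positive definite. One small point to make explicit: a maximum in the L\"{o}wner order need not exist a priori, so rather than asserting that maximizing the functional is ``equivalent'' to minimizing $G$, you should state that you are exhibiting a feasible pair whose value dominates that of every other feasible pair --- which is precisely what your completed square delivers.
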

\begin{proof}
Note first that, by direct computations we have
\begingroup\makeatletter\def\f@size{10}\check@mathfonts
\begin{align}\label{T.01.I.1.1}
a\left(a+\nu^{-1}(1-\nu)b\right)^{-1}b=\nu{H}_{\nu}(a, b)=b\left(a+\nu^{-1}(1-\nu)b\right)^{-1}a
\end{align}
\endgroup
and
\begingroup\makeatletter\def\f@size{9}\check@mathfonts
\begin{align}\label{T.01.I.1.2}
&\nu^{-1}(1-\nu)a^{-\frac{1}{2}}ba^{-\frac{1}{2}}
-\left(e+\nu(1-\nu)^{-1}a^{\frac{1}{2}}b^{-1}a^{\frac{1}{2}}\right)^{-1}\nonumber
\\& \qquad = \left(e+\nu^{-1}(1-\nu)a^{-\frac{1}{2}}ba^{-\frac{1}{2}}\right)^{-\frac{1}{2}}
\left(\nu^{-1}(1-\nu)a^{-\frac{1}{2}}ba^{-\frac{1}{2}}\right)^{2}
\left(e+\nu^{-1}(1-\nu)a^{-\frac{1}{2}}ba^{-\frac{1}{2}}\right)^{-\frac{1}{2}}.
\end{align}
\endgroup
Set $z=\nu^{-1}(1-\nu)\left(a+\nu^{-1}(1-\nu)b\right)^{-1}b$ and $w=\left(a+\nu^{-1}(1-\nu)b\right)^{-1}a$.
Then $z+w=e$. By \eqref{T.01.I.1.1}, we have
\begingroup\makeatletter\def\f@size{10}\check@mathfonts
\begin{align*}
&\displaystyle{\max_{x+y=e}}\left\{(1-\nu)^{-1}\left(\nu a - x^*ax\right) + \nu^{-1}\left((1-\nu)b - y^*by\right)\right\}
\\& \qquad \geq (1-\nu)^{-1}\left(\nu a - z^*az\right) + \nu^{-1}\left((1-\nu)b - w^*bw\right)
\\& \qquad = (1-\nu)^{-1}\nu a - \nu^{-2}(1-\nu)b\left(a+\nu^{-1}(1-\nu)b\right)^{-1}a\left(a+\nu^{-1}(1-\nu)b\right)^{-1}b
\\& \qquad \qquad +\nu^{-1}(1-\nu)b - \nu^{-1}a\left(a+\nu^{-1}(1-\nu)b\right)^{-1}b\left(a+\nu^{-1}(1-\nu)b\right)^{-1}a
\\& \qquad = {A}_{\nu}\left(\nu^{-1}b, (1-\nu)^{-1}a\right) - \nu^{-1}(1-\nu)b\left(a+\nu^{-1}(1-\nu)b\right)^{-1}{H}_{\nu}(a, b)
\\& \qquad \qquad -a\left(a+\nu^{-1}(1-\nu)b\right)^{-1}{H}_{\nu}(a, b)
\\& \qquad = {A}_{\nu}\left(\nu^{-1}b, (1-\nu)^{-1}a\right) - \left(\nu^{-1}(1-\nu)b+a\right)\left(a+\nu^{-1}(1-\nu)b\right)^{-1}{H}_{\nu}(a, b)
\\& \qquad ={A}_{\nu}\left(\nu^{-1}b, (1-\nu)^{-1}a\right) - {H}_{\nu}(a, b)={C}_{\nu}(a, b),
\end{align*}
\endgroup
and hence
\begingroup\makeatletter\def\f@size{10}\check@mathfonts
\begin{align}\label{T.01.I.2}
{C}_{\nu}(a, b)\leq \displaystyle{\max_{x+y=e}}\left\{(1-\nu)^{-1}\left(\nu a - x^*ax\right) + \nu^{-1}\left((1-\nu)b - y^*by\right)\right\}.
\end{align}
\endgroup
Now, suppose $x, y\in\mathscr{A}$ with $x+y=e$. Let us put
\begingroup\makeatletter\def\f@size{9}\check@mathfonts
\begin{align*}
h := \left(e+\nu^{-1}(1-\nu)a^{-\frac{1}{2}}ba^{-\frac{1}{2}}\right)^{\frac{1}{2}}a^{\frac{1}{2}}xa^{-\frac{1}{2}}
-\nu^{-1}(1-\nu)a^{-\frac{1}{2}}ba^{-\frac{1}{2}}
\left(e+\nu^{-1}(1-\nu)a^{-\frac{1}{2}}ba^{-\frac{1}{2}}\right)^{-\frac{1}{2}}.
\end{align*}
\endgroup
By exploiting \eqref{T.01.I.1.2} we have
\begingroup\makeatletter\def\f@size{9}\check@mathfonts
\begin{align*}
&a^{\frac{1}{2}}\left(e+\nu(1-\nu)^{-1}a^{\frac{1}{2}}b^{-1}a^{\frac{1}{2}}\right)^{-1}a^{\frac{1}{2}}+a^{\frac{1}{2}}h^*ha^{\frac{1}{2}}
\\& \qquad = a^{\frac{1}{2}}\left(e+\nu(1-\nu)^{-1}a^{\frac{1}{2}}b^{-1}a^{\frac{1}{2}}\right)^{-1}a^{\frac{1}{2}}
+x^*a^{\frac{1}{2}}\left(e+\nu^{-1}(1-\nu)a^{-\frac{1}{2}}ba^{-\frac{1}{2}}\right)a^{\frac{1}{2}}x
\\& \qquad \quad -x^*a^{\frac{1}{2}}\left(e+\nu^{-1}(1-\nu)a^{-\frac{1}{2}}ba^{-\frac{1}{2}}\right)^{\frac{1}{2}}
\left(\nu^{-1}(1-\nu)a^{-\frac{1}{2}}ba^{-\frac{1}{2}}\right)\left(e+\nu^{-1}(1-\nu)a^{-\frac{1}{2}}ba^{-\frac{1}{2}}\right)^{-\frac{1}{2}}a^{\frac{1}{2}}
\\& \qquad \quad -a^{\frac{1}{2}}\left(e+\nu^{-1}(1-\nu)a^{-\frac{1}{2}}ba^{-\frac{1}{2}}\right)^{-\frac{1}{2}}
\left(\nu^{-1}(1-\nu)a^{-\frac{1}{2}}ba^{-\frac{1}{2}}\right)\left(e+\nu^{-1}(1-\nu)a^{-\frac{1}{2}}ba^{-\frac{1}{2}}\right)^{\frac{1}{2}}a^{\frac{1}{2}}x
\\& \qquad \quad +a^{\frac{1}{2}}\left(e+\nu^{-1}(1-\nu)a^{-\frac{1}{2}}ba^{-\frac{1}{2}}\right)^{-\frac{1}{2}}
\left(\nu^{-1}(1-\nu)a^{-\frac{1}{2}}ba^{-\frac{1}{2}}\right)^{2}
\left(e+\nu^{-1}(1-\nu)a^{-\frac{1}{2}}ba^{-\frac{1}{2}}\right)^{-\frac{1}{2}}a^{\frac{1}{2}}
\\& \qquad = a^{\frac{1}{2}}\left(e+\nu(1-\nu)^{-1}a^{\frac{1}{2}}b^{-1}a^{\frac{1}{2}}\right)^{-1}a^{\frac{1}{2}}
+x^*ax
\\& \qquad \quad +a^{\frac{1}{2}}\left(e-a^{-\frac{1}{2}}x^*a^{\frac{1}{2}}\right)\left(\nu^{-1}(1-\nu)a^{-\frac{1}{2}}ba^{-\frac{1}{2}}\right)
\left(e-a^{\frac{1}{2}}xa^{-\frac{1}{2}}\right)a^{\frac{1}{2}}
-\nu^{-1}(1-\nu)b
\\& \qquad \quad +a^{\frac{1}{2}}\left(e+\nu^{-1}(1-\nu)a^{-\frac{1}{2}}ba^{-\frac{1}{2}}\right)^{-\frac{1}{2}}
\left(\nu^{-1}(1-\nu)a^{-\frac{1}{2}}ba^{-\frac{1}{2}}\right)^{2}
\left(e+\nu^{-1}(1-\nu)a^{-\frac{1}{2}}ba^{-\frac{1}{2}}\right)^{-\frac{1}{2}}a^{\frac{1}{2}}
\\& \qquad = a^{\frac{1}{2}}\left(e+\nu(1-\nu)^{-1}a^{\frac{1}{2}}b^{-1}a^{\frac{1}{2}}\right)^{-1}a^{\frac{1}{2}}
+x^*ax
\\& \qquad \quad +a^{\frac{1}{2}}\left(a^{-\frac{1}{2}}y^*a^{\frac{1}{2}}\right)\left(\nu^{-1}(1-\nu)a^{-\frac{1}{2}}ba^{-\frac{1}{2}}\right)
\left(a^{\frac{1}{2}}ya^{-\frac{1}{2}}\right)a^{\frac{1}{2}}
-\nu^{-1}(1-\nu)b
\\& \qquad \quad +\nu^{-1}(1-\nu)b
-a^{\frac{1}{2}}\left(e+\nu(1-\nu)^{-1}a^{\frac{1}{2}}b^{-1}a^{\frac{1}{2}}\right)^{-1}a^{\frac{1}{2}}
\\& \qquad = x^*ax+\nu^{-1}(1-\nu)y^*by,
\end{align*}
\endgroup
and wherefrom
\begingroup\makeatletter\def\f@size{9}\check@mathfonts
\begin{align}\label{T.01.I.3}
a^{\frac{1}{2}}\left(e+\nu(1-\nu)^{-1}a^{\frac{1}{2}}b^{-1}a^{\frac{1}{2}}\right)^{-1}a^{\frac{1}{2}}
= x^*ax+\nu^{-1}(1-\nu)y^*by-a^{\frac{1}{2}}h^*ha^{\frac{1}{2}}.
\end{align}
\endgroup
Since $0\leq (1-\nu)^{-1}a^{\frac{1}{2}}h^*ha^{\frac{1}{2}}$, by \eqref{T.01.I.3} we obtain
\begingroup\makeatletter\def\f@size{10}\check@mathfonts
\begin{align*}
&(1-\nu)^{-1}\left(\nu a - x^*ax\right) + \nu^{-1}\left((1-\nu)b - y^*by\right)
\\& \qquad \leq (1-\nu)^{-1}\left(\nu a - x^*ax\right) + \nu^{-1}\left((1-\nu)b - y^*by\right) + (1-\nu)^{-1}a^{\frac{1}{2}}h^*ha^{\frac{1}{2}}
\\& \qquad = (1-\nu)\nu^{-1}b + \nu (1-\nu)^{-1}a
-(1-\nu)^{-1}a^{\frac{1}{2}}\left(e + \nu(1-\nu)^{-1} a^{\frac{1}{2}}b^{-1}a^{\frac{1}{2}}\right)^{-1}a^{\frac{1}{2}}
\\& \qquad= (1-\nu)\nu^{-1}b + \nu (1-\nu)^{-1}a -(1-\nu)^{-1}\left(a^{-1} + \nu(1-\nu)^{-1} b^{-1}\right)^{-1}
\\& \qquad= (1-\nu)\nu^{-1}b + \nu (1-\nu)^{-1}a - \left((1-\nu)a^{-1} + \nu b^{-1}\right)^{-1}
\\& \qquad={A}_{\nu}\left(\nu^{-1}b, (1-\nu)^{-1}a\right) - {H}_{\nu}(a, b),
\end{align*}
\endgroup
and so
\begingroup\makeatletter\def\f@size{10}\check@mathfonts
\begin{align}\label{T.01.I.4}
(1-\nu)^{-1}\left(\nu a - x^*ax\right) + \nu^{-1}\left((1-\nu)b - y^*by\right)\leq {C}_{\nu}(a, b) \qquad (x+y=e).
\end{align}
\endgroup
It follows from \eqref{T.01.I.4} that
\begingroup\makeatletter\def\f@size{10}\check@mathfonts
\begin{align}\label{T.01.I.5}
\displaystyle{\max_{x+y=e}}\left\{(1-\nu)^{-1}\left(\nu a - x^*ax\right) + \nu^{-1}\left((1-\nu)b - y^*by\right)\right\}\leq {C}_{\nu}(a, b).
\end{align}
\endgroup
Now, by \eqref{T.01.I.2} and \eqref{T.01.I.5}, we deduce the desired result.
\end{proof}
As a consequence of Theorem \ref{T.01}, we have the following result.
\begin{corollary}\label{C.02072}
Let $\mu\in(0, 1)$. Then
\begin{align*}
{C}_{\nu}\big({A}_{\mu}(a, b), {A}_{\mu}(c, d)\big) \leq {A}_{\mu}\big({C}_{\nu}(a, c), {C}_{\nu}(b, d)\big).
\end{align*}
\end{corollary}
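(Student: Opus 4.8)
The plan is to exploit the variational formula of Theorem~\ref{T.01}, whose crucial feature is that the functional being maximized is linear in each of its two positive definite arguments. Concretely, for a fixed pair $x, y \in \mathscr{A}$ with $x+y = e$, introduce the notation
\begin{align*}
\Phi_{x,y}(p, q) := (1-\nu)^{-1}\left(\nu p - x^*px\right) + \nu^{-1}\left((1-\nu)q - y^*qy\right).
\end{align*}
Since $p \mapsto \nu p - x^*px$ and $q \mapsto (1-\nu)q - y^*qy$ are linear maps, $\Phi_{x,y}$ is linear in $p$ and in $q$ separately, so that
\begin{align*}
\Phi_{x,y}\big((1-\mu)a + \mu b,\, (1-\mu)c + \mu d\big) = (1-\mu)\,\Phi_{x,y}(a, c) + \mu\,\Phi_{x,y}(b, d).
\end{align*}
In the language of the means this reads $\Phi_{x,y}\big({A}_{\mu}(a,b), {A}_{\mu}(c,d)\big) = {A}_{\mu}\big(\Phi_{x,y}(a,c), \Phi_{x,y}(b,d)\big)$, which is the identity that powers the whole argument.

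First I would fix an arbitrary admissible pair $(x,y)$ and apply Theorem~\ref{T.01} to each of the pairs $(a,c)$ and $(b,d)$; since ${C}_{\nu}(\cdot,\cdot)$ is by definition the maximum of the corresponding family, this yields $\Phi_{x,y}(a,c) \leq {C}_{\nu}(a,c)$ and $\Phi_{x,y}(b,d) \leq {C}_{\nu}(b,d)$. Because $\mu$ and $1-\mu$ are positive, scaling by these weights and adding respects the Löwner order, so the linearity identity above gives
\begin{align*}
\Phi_{x,y}\big({A}_{\mu}(a,b), {A}_{\mu}(c,d)\big) \leq (1-\mu)\,{C}_{\nu}(a,c) + \mu\,{C}_{\nu}(b,d) = {A}_{\mu}\big({C}_{\nu}(a,c), {C}_{\nu}(b,d)\big).
\end{align*}
Crucially, the right-hand side no longer depends on $x, y$, so this bound holds uniformly over all $x, y$ with $x + y = e$.

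Finally, I would take the maximum over the constraint $x + y = e$ on the left-hand side. By Theorem~\ref{T.01} applied to the pair $\big({A}_{\mu}(a,b), {A}_{\mu}(c,d)\big)$, that maximum is exactly ${C}_{\nu}\big({A}_{\mu}(a,b), {A}_{\mu}(c,d)\big)$; and since every member of the family is dominated by the fixed bound ${A}_{\mu}\big({C}_{\nu}(a,c), {C}_{\nu}(b,d)\big)$, so is the maximum, which is precisely the claimed inequality. The step deserving the most care—though it is conceptually routine—is the handling of the maximization: a single parameter $(x,y)$ simultaneously controls all three contraharmonic means, and the inequality is driven by the elementary principle that the joint maximum of a sum cannot exceed the sum of the two separate maxima. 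No common maximizer need be exhibited; it suffices that the joint maximum on the left is bounded above by the uniform estimate arising from the two independent maxima on the right.
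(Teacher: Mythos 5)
Your proposal is correct and follows essentially the same route as the paper: both arguments rest on the linearity of the variational functional from Theorem~\ref{T.01} in its two positive definite arguments, bound each of $\Phi_{x,y}(a,c)$ and $\Phi_{x,y}(b,d)$ by the corresponding contraharmonic mean for a fixed admissible pair $(x,y)$, and then pass to the maximum over $x+y=e$. The only difference is cosmetic (you build the estimate from the left-hand side upward, while the paper starts from ${A}_{\mu}\big({C}_{\nu}(a,c),{C}_{\nu}(b,d)\big)$ and works downward), so no further comment is needed.
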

\begin{proof}
Let $x+y=e$.
By Theorem \ref{T.01} we have
\begingroup\makeatletter\def\f@size{11}\check@mathfonts
\begin{align*}
{A}_{\mu}\big({C}_{\nu}(a, c), {C}_{\nu}(b, d)\big)&=(1-\mu){C}_{\nu}(a, c)+\mu{C}_{\nu}(b, d)
\\&\geq (1-\mu)(1-\nu)^{-1}\left(\nu a - x^*ax\right) + (1-\mu)\nu^{-1}\left((1-\nu)c - y^*cy\right)
\\& \qquad + \mu(1-\nu)^{-1}\left(\nu b - x^*bx\right) + \mu\nu^{-1}\left((1-\nu)d - y^*dy\right)
\\& = (1-\nu)^{-1}\Big(\nu ((1-\mu)a+\mu b) - x^*((1-\mu)a+\mu b)x\Big)
\\& \qquad + \nu^{-1}\Big((1-\nu)((1-\mu)c+\mu d) - y^*((1-\mu)c+\mu d)y\Big),
\end{align*}
\endgroup
and hence
\begingroup\makeatletter\def\f@size{9}\check@mathfonts
\begin{align*}
(1-\nu)^{-1}\Big(\nu {A}_{\mu}(a, b) - x^*{A}_{\mu}(a, b)x\Big)
+ \nu^{-1}\Big((1-\nu){A}_{\mu}(c, d) - y^*{A}_{\mu}(c, d)y\Big)\leq {A}_{\mu}\left({C}_{\nu}(a, c), {C}_{\nu}(b, d)\right).
\end{align*}
\endgroup
Thus,
\begingroup\makeatletter\def\f@size{8}\check@mathfonts
\begin{align*}
\displaystyle{\max_{x+y=e}}\left\{(1-\nu)^{-1}\Big(\nu {A}_{\mu}(a, b) - x^*{A}_{\mu}(a, b)x\Big)
+ \nu^{-1}\Big((1-\nu){A}_{\mu}(c, d) - y^*{A}_{\mu}(c, d)y\Big)\right\}
\leq {A}_{\mu}\big({C}_{\nu}(a, c), {C}_{\nu}(b, d)\big).
\end{align*}
\endgroup
Now, from Theorem \ref{T.01}
we obtain ${C}_{\nu}\big({A}_{\mu}(a, b), {A}_{\mu}(c, d)\big) \leq {A}_{\mu}\big({C}_{\nu}(a, c), {C}_{\nu}(b, d)\big)$.
\end{proof}
Another consequence of Theorem \ref{T.01} can be stated as follows.
\begin{corollary}\label{C.02073}
Let $z$ be an invertible element of $\mathscr{A}$. Then
\begin{align*}
{C}_{\nu}\big(z^*az, z^*bz\big) = z^*{C}_{\nu}(a, b)z.
\end{align*}
\end{corollary}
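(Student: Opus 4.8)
The plan is to read off the variational formula of Theorem \ref{T.01} for both sides and to link them by a change of variables inside the maximization, using that conjugation by $z$ is an order isomorphism. First I would note that $z^*az$ and $z^*bz$ are again positive definite: positivity is immediate from $z^*az=(a^{1/2}z)^*(a^{1/2}z)\ge0$, and invertibility from the invertibility of $a,b,z$. Hence $C_\nu(z^*az,z^*bz)$ is defined and Theorem \ref{T.01} gives
\begin{align*}
C_\nu(z^*az,z^*bz)=\max_{x+y=e}\Big\{(1-\nu)^{-1}\big(\nu z^*az-x^*z^*az\,x\big)+\nu^{-1}\big((1-\nu)z^*bz-y^*z^*bz\,y\big)\Big\}.
\end{align*}

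The central manipulation is a change of variable in this maximum. For a pair $x+y=e$ I would set $u=zxz^{-1}$ and $v=zyz^{-1}$, so that $u+v=z(x+y)z^{-1}=e$. Using the relations $z^*(z^{-1})^*=e$ and $z^{-1}z=e$ one obtains $z^*u^*au\,z=x^*z^*az\,x$, and likewise $z^*v^*bv\,z=y^*z^*bz\,y$. Therefore the summand displayed above equals
\begin{align*}
z^*\Big[(1-\nu)^{-1}\big(\nu a-u^*au\big)+\nu^{-1}\big((1-\nu)b-v^*bv\big)\Big]z.
\end{align*}
Since $(x,y)\mapsto(u,v)=(zxz^{-1},zyz^{-1})$ is a bijection of $\{(x,y):x+y=e\}$ onto $\{(u,v):u+v=e\}$ (with inverse $u\mapsto z^{-1}uz$), the maximum over the $(x,y)$'s coincides with the maximum over the $(u,v)$'s of these $z$-conjugated elements.

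Finally I would push the conjugation through the maximum. The map $\Phi(t)=z^*tz$ is order preserving on self-adjoint elements, as is its inverse $\Phi^{-1}(s)=(z^{-1})^*sz^{-1}$; hence $\Phi$ is an order isomorphism and maps the maximum of a set to the maximum of its image. Applying this to the family in Theorem \ref{T.01} that computes $C_\nu(a,b)$ gives
\begin{align*}
z^*C_\nu(a,b)z=\max_{u+v=e}z^*\Big\{(1-\nu)^{-1}\big(\nu a-u^*au\big)+\nu^{-1}\big((1-\nu)b-v^*bv\big)\Big\}z,
\end{align*}
which by the previous paragraph is precisely $C_\nu(z^*az,z^*bz)$, as desired. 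The only step requiring genuine care, rather than routine computation, is this last interchange of $\Phi$ with the maximum, which rests on $z$ being invertible so that $\Phi$ is a bona fide order isomorphism; the remaining steps are the bijection of the constraint sets and the cancellations $z^*(z^{-1})^*=z^{-1}z=e$.
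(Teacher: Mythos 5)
Your proof is correct and follows essentially the same route as the paper: both rest on the variational formula of Theorem \ref{T.01} together with the change of variables $x\mapsto zxz^{-1}$, $y\mapsto zyz^{-1}$ on the constraint set $x+y=e$. The only difference is presentational --- the paper derives the two inequalities ${C}_{\nu}(z^*az,z^*bz)\leq z^*{C}_{\nu}(a,b)z$ and its reverse separately by substituting particular admissible pairs, whereas you obtain the equality in one step by observing that conjugation by $z$ is an order isomorphism carrying the maximizing family for ${C}_{\nu}(a,b)$ bijectively onto that for ${C}_{\nu}(z^*az,z^*bz)$, which is a legitimate (and slightly cleaner) way to organize the same argument.
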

\begin{proof}
Let $x+y=e$. Put ${x}_{_0}= zxz^{-1}$ and ${y}_{_0}= zyz^{-1}$. Then ${x}_{_0}+{y}_{_0}=e$. So, by Theorem \ref{T.01}, we have
\begingroup\makeatletter\def\f@size{11}\check@mathfonts
\begin{align*}
z^*{C}_{\nu}(a, b)z&
= z^*\left(\displaystyle{\max_{x+y=e}}\left\{(1-\nu)^{-1}\left(\nu a - x^*ax\right) + \nu^{-1}\left((1-\nu)b - y^*by\right)\right\}\right)z
\\& \geq z^*\left((1-\nu)^{-1}\left(\nu a - {x}_{_0}^*a{x}_{_0}\right) + \nu^{-1}\left((1-\nu)b - {y}_{_0}^*b{y}_{_0}\right)\right)z
\\& = (1-\nu)^{-1}\left(\nu (z^*az) - x^*(z^*az)x\right) + \nu^{-1}\left((1-\nu)(z^*bz) - y^*(z^*bz)y\right),
\end{align*}
\endgroup
and so
\begingroup\makeatletter\def\f@size{11}\check@mathfonts
\begin{align*}
(1-\nu)^{-1}\left(\nu (z^*az) - x^*(z^*az)x\right) + \nu^{-1}\left((1-\nu)(z^*bz) - y^*(z^*bz)y\right)\leq z^*{C}_{\nu}(a, b)z.
\end{align*}
\endgroup
Therefore,
\begingroup\makeatletter\def\f@size{10}\check@mathfonts
\begin{align*}
\displaystyle{\max_{x+y=e}}\left\{(1-\nu)^{-1}\left(\nu (z^*az) - x^*(z^*az)x\right) + \nu^{-1}\left((1-\nu)(z^*bz) - y^*(z^*bz)y\right)\right\}
\leq z^*{C}_{\nu}(a, b)z.
\end{align*}
\endgroup
Now, by Theorem \ref{T.01}, we conclude that ${C}_{\nu}\big(z^*az, z^*bz\big) \leq z^*{C}_{\nu}(a, b)z$.
By a similar argument, we get $z^*{C}_{\nu}(a, b)z\leq{C}_{\nu}\big(z^*az, z^*bz\big)$ and
the proof is completed.
\end{proof}
Our next result reads as follows.
\begin{theorem}\label{T.021073}
Let $\mu\in (0, 1)$. Then
\begin{align*}
{C}_{\nu}\big(a, {A}_{\mu}(a, b)\big) \leq {A}_{\mu}\left(\frac{2\nu^2-2\nu+1}{\nu-\nu^2} a, {C}_{\nu}(a, b)\right).
\end{align*}
\end{theorem}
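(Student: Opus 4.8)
The plan is to derive this from Corollary \ref{C.02072} together with the explicit value of $C_\nu(a,a)$. The key observation is that the first argument on the left, namely $a$, is itself a $\mu$-weighted arithmetic mean of $a$ with itself: $a = A_\mu(a,a)$. Hence, applying Corollary \ref{C.02072} to the positive definite elements $a, a, a, b$ (so that $A_\mu(a,a) = a$ while $A_\mu(a,b)$ stays as is) gives at once
\begin{align*}
{C}_{\nu}\big(a, {A}_{\mu}(a, b)\big) = {C}_{\nu}\big({A}_{\mu}(a, a), {A}_{\mu}(a, b)\big) \leq {A}_{\mu}\big({C}_{\nu}(a, a), {C}_{\nu}(a, b)\big).
\end{align*}
This reduces the whole statement to comparing ${C}_{\nu}(a,a)$ with $\frac{2\nu^2-2\nu+1}{\nu-\nu^2}\,a$.

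Next I would bound ${C}_{\nu}(a,a)$. By Remark \ref{R.0004801}(ii) we have ${C}_{\nu}(a,a) = \frac{3\nu^2-3\nu+1}{\nu-\nu^2}\,a$; alternatively, inequality \eqref{R.000401.I.1} with $b = a$ gives ${C}_{\nu}(a,a) \leq {A}_{\nu}\big(\nu^{-1}a, (1-\nu)^{-1}a\big)$, and a direct computation shows ${A}_{\nu}\big(\nu^{-1}a, (1-\nu)^{-1}a\big) = \frac{2\nu^2-2\nu+1}{\nu-\nu^2}\,a$. Either route works, since $\nu-\nu^2 = \nu(1-\nu) > 0$ and
\begin{align*}
\frac{2\nu^2-2\nu+1}{\nu-\nu^2} - \frac{3\nu^2-3\nu+1}{\nu-\nu^2} = \frac{\nu-\nu^2}{\nu-\nu^2} = 1 > 0,
\end{align*}
so that ${C}_{\nu}(a,a) \leq \frac{2\nu^2-2\nu+1}{\nu-\nu^2}\,a$. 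Because $A_\mu(\,\cdot\,, q) = (1-\mu)(\,\cdot\,) + \mu q$ is monotone in its first slot (as $1-\mu > 0$), this estimate propagates through the arithmetic mean to yield ${A}_{\mu}\big({C}_{\nu}(a, a), {C}_{\nu}(a, b)\big) \leq {A}_{\mu}\big(\frac{2\nu^2-2\nu+1}{\nu-\nu^2}\,a, {C}_{\nu}(a, b)\big)$, and chaining this with the first display gives the claim.

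I do not anticipate a genuine obstacle: the substance is a specialization of Corollary \ref{C.02072} followed by an elementary scalar estimate. The only step demanding a moment's thought is spotting the correct instance of the corollary, namely collapsing both entries of one arithmetic mean to $a$; once that is seen, the rest is monotonicity and a one-line computation. If a self-contained argument bypassing the corollary were preferred, I would instead invoke the variational formula of Theorem \ref{T.01} for ${C}_{\nu}(a, A_\mu(a,b))$, use linearity to expand $(1-\nu){A}_{\mu}(a,b) - y^*{A}_{\mu}(a,b)y = (1-\mu)\big((1-\nu)a - y^*ay\big) + \mu\big((1-\nu)b - y^*by\big)$, split the $x$-term with weights $1-\mu$ and $\mu$, and bound the two resulting expressions by ${C}_{\nu}(a,a)$ and ${C}_{\nu}(a,b)$ respectively; this is merely the proof of Corollary \ref{C.02072} inlined and leads to the same conclusion.
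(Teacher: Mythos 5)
Your proof is correct, but it takes a genuinely different route from the paper. The paper proves Theorem \ref{T.021073} directly from the variational formula of Theorem \ref{T.01}: it takes an arbitrary decomposition $x+y=e$, expands $(1-\nu)\,{A}_{\mu}(a,b)-y^*{A}_{\mu}(a,b)y$ by linearity, and absorbs the leftover terms using the auxiliary operator inequality $0\leq a-ax-x^*a+(1-\nu)^{-1}x^*ax$, which it obtains from the positivity of $\bigl(\sqrt{1-\nu}\,a^{1/2}-\tfrac{1}{\sqrt{1-\nu}}a^{1/2}x\bigr)^*\bigl(\sqrt{1-\nu}\,a^{1/2}-\tfrac{1}{\sqrt{1-\nu}}a^{1/2}x\bigr)$ plus $0\leq\nu a$. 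You instead observe that $a={A}_{\mu}(a,a)$ and specialize Corollary \ref{C.02072} to the quadruple $(a,a,a,b)$, obtaining ${C}_{\nu}\bigl(a,{A}_{\mu}(a,b)\bigr)\leq {A}_{\mu}\bigl({C}_{\nu}(a,a),{C}_{\nu}(a,b)\bigr)$, and then use ${C}_{\nu}(a,a)=\frac{3\nu^2-3\nu+1}{\nu-\nu^2}a\leq\frac{2\nu^2-2\nu+1}{\nu-\nu^2}a$ together with monotonicity of ${A}_{\mu}$ in its first slot. There is no circularity, since Corollary \ref{C.02072} is established before this theorem and depends only on Theorem \ref{T.01}. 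Your argument is shorter, and it actually yields the sharper intermediate bound ${A}_{\mu}\bigl(\frac{3\nu^2-3\nu+1}{\nu-\nu^2}a,{C}_{\nu}(a,b)\bigr)$ (the two coefficients differ by exactly $1$, so you give away precisely $(1-\mu)a$ in relaxing to the stated form); the paper's direct computation buys a self-contained derivation that does not pass through the corollary, which is essentially what your closing remark about inlining describes.
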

\begin{proof}
Let $x+y=e$. Using the inequality
\begin{align*}
0\leq\left(\sqrt{1-\nu}a^{\frac{1}{2}}-\frac{1}{\sqrt{1-\nu}}a^{\frac{1}{2}}x\right)^*
\left(\sqrt{1-\nu}a^{\frac{1}{2}}-\frac{1}{\sqrt{1-\nu}}a^{\frac{1}{2}}x\right),
\end{align*}
we obtain
\begin{align*}
0\leq (1-\nu)a-ax-x^*a+ (1-\nu)^{-1}x^*ax.
\end{align*}
Since $0\leq \nu a$, from the above inequality we get
\begin{align}\label{T.021073.I.1}
0\leq a-ax-x^*a+ (1-\nu)^{-1}x^*ax.
\end{align}
Let us put $\gamma :=\frac{2\nu^2-2\nu+1}{\nu-\nu^2}$.
By Theorem \ref{T.01} and \eqref{T.021073.I.1} we have
\begingroup\makeatletter\def\f@size{10}\check@mathfonts
\begin{align*}
&(1-\nu)^{-1}\left(\nu a - x^*ax\right) + \nu^{-1}\Big((1-\nu){A}_{\mu}(a, b) - y^*{A}_{\mu}(a, b)y\Big)
\\& \quad = (1-\nu)^{-1}\nu a - (1-\nu)^{-1}x^*ax + (1-\mu)\nu^{-1}(1-\nu)a + \mu\nu^{-1}(1-\nu)b
\\& \quad \qquad -(1-\mu)\nu^{-1}y^*ay - \mu\nu^{-1}y^*by
\\& \quad = (1-\mu)\frac{2\nu^2-2\nu+1}{\nu-\nu^2}a+(1-\nu)^{-1}\nu a+(1-\mu)\left(\nu^{-1}(1-\nu)-\frac{2\nu^2-2\nu+1}{\nu-\nu^2}\right)a
\\& \quad \qquad - (1-\nu)^{-1}x^*ax + \mu\nu^{-1}(1-\nu)b -(1-\mu)\nu^{-1}(e-x^*)a(e-x) - \mu\nu^{-1}y^*by
\\& \quad = (1-\mu)\gamma a+\mu(1-\nu)^{-1}\nu a - (1-\nu)^{-1}x^*ax + \mu\nu^{-1}(1-\nu)b
\\& \quad \qquad -(1-\mu)\nu^{-1}a + (1-\mu)\nu^{-1}(ax+x^*a)-(1-\mu)\nu^{-1}x^*ax -\mu\nu^{-1}y^*by
\\& \quad = (1-\mu)\gamma a + \mu\Big((1-\nu)^{-1}\left(\nu a - x^*ax\right) + \nu^{-1}\left((1-\nu)b - y^*by\right)\Big)
\\& \qquad \qquad -(1-\mu)\nu^{-1}\Big(a-ax-x^*a+ (1-\nu)^{-1}x^*ax\Big)
\\& \quad \leq (1-\mu)\gamma a + \mu{C}_{\nu}(a, b) = {A}_{\mu}\big(\gamma a, {C}_{\nu}(a, b)\big),
\end{align*}
\endgroup
and so
\begingroup\makeatletter\def\f@size{10}\check@mathfonts
\begin{align*}
(1-\nu)^{-1}\left(\nu a - x^*ax\right) + \nu^{-1}\Big((1-\nu){A}_{\mu}(a, b) - y^*{A}_{\mu}(a, b)y\Big)
\leq {A}_{\mu}\big(\gamma a, {C}_{\nu}(a, b)\big).
\end{align*}
\endgroup
From this it follows that
\begingroup\makeatletter\def\f@size{10}\check@mathfonts
\begin{align*}
\displaystyle{\max_{x+y=e}}\left\{(1-\nu)^{-1}\left(\nu a - x^*ax\right) + \nu^{-1}\Big((1-\nu){A}_{\mu}(a, b) - y^*{A}_{\mu}(a, b)y\Big)\right\}
\leq {A}_{\mu}\big(\gamma a, {C}_{\nu}(a, b)\big),
\end{align*}
\endgroup
and by Theorem \ref{T.01} we conclude that
${C}_{\nu}\big(a, {A}_{\mu}(a, b)\big) \leq {A}_{\mu}\big(\gamma a, {C}_{\nu}(a, b)\big)$.
\end{proof}
Here, we state an inequality for non-zero positive linear functional.
\begin{corollary}\label{C.004}
Let $\varphi$ be a non-zero positive linear functional on $\mathscr{A}$. Then
\begin{align*}
{C}_{\nu}\big(\varphi(a), \varphi(b)\big)\leq \varphi\big({C}_{\nu}(a, b)\big).
\end{align*}
\end{corollary}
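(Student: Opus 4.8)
The plan is to derive this scalar inequality from the operator variational formula of Theorem \ref{T.01} by pushing it through $\varphi$ and then restricting the free elements $x,y$ to scalar multiples of the unit $e$. I will use two facts: a non-zero positive linear functional is linear and order-preserving (if $0\leq u$ then $0\leq\varphi(u)$, so $u\leq v$ forces $\varphi(u)\leq\varphi(v)$), and the scalar weighted contraharmonic mean admits the variational description
\[
{C}_{\nu}(\alpha, \beta)=\max_{s+t=1}\left\{(1-\nu)^{-1}\left(\nu\alpha-s^2\alpha\right)+\nu^{-1}\left((1-\nu)\beta-t^2\beta\right)\right\}
\]
for positive scalars $\alpha,\beta$. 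This identity is nothing but Theorem \ref{T.01} interpreted in the one-dimensional $C^*$-algebra $\mathbb{C}$, and at $\nu=\tfrac12$ it reduces to the scalar formula recorded just before Theorem \ref{T.01}.

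First I would note that the left-hand side is well defined: since $a,b$ are positive definite they dominate positive multiples of $e$, and a non-zero positive functional has $\varphi(e)>0$, so $\varphi(a),\varphi(b)>0$. Next, Theorem \ref{T.01} gives, for every $x,y\in\mathscr{A}$ with $x+y=e$,
\[
(1-\nu)^{-1}\left(\nu a-x^*ax\right)+\nu^{-1}\left((1-\nu)b-y^*by\right)\leq{C}_{\nu}(a,b).
\]
Applying $\varphi$ and using linearity and positivity yields
\[
(1-\nu)^{-1}\left(\nu\varphi(a)-\varphi(x^*ax)\right)+\nu^{-1}\left((1-\nu)\varphi(b)-\varphi(y^*by)\right)\leq\varphi\big({C}_{\nu}(a,b)\big).
\]

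Now I would specialize to $x=se$ and $y=te$ with real $s+t=1$. These satisfy $x+y=e$, and since $s,t$ are real, $x^*ax=s^2a$ and $y^*by=t^2b$, so $\varphi(x^*ax)=s^2\varphi(a)$ and $\varphi(y^*by)=t^2\varphi(b)$. The preceding inequality becomes
\[
(1-\nu)^{-1}\left(\nu\varphi(a)-s^2\varphi(a)\right)+\nu^{-1}\left((1-\nu)\varphi(b)-t^2\varphi(b)\right)\leq\varphi\big({C}_{\nu}(a,b)\big),
\]
valid for all $s+t=1$. Since every member of this scalar family is bounded above by $\varphi\big({C}_{\nu}(a,b)\big)$, so is its maximum over $s+t=1$; and by the scalar variational identity that maximum is exactly ${C}_{\nu}(\varphi(a),\varphi(b))$, which proves the claim.

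The only delicate point---and the nearest thing to an obstacle---is justifying that the maximum over scalars of the left-hand expression equals ${C}_{\nu}(\varphi(a),\varphi(b))$ and is not merely a lower bound for it. This is the one-dimensional case of Theorem \ref{T.01}: minimizing $(1-\nu)^{-1}\varphi(a)\,s^2+\nu^{-1}\varphi(b)\,t^2$ subject to $s+t=1$ gives the optimizer $s=\frac{\nu^{-1}\varphi(b)}{(1-\nu)^{-1}\varphi(a)+\nu^{-1}\varphi(b)}$, $t=1-s$, and a direct computation shows the resulting value equals $(1-\nu)\nu^{-1}\varphi(b)+\nu(1-\nu)^{-1}\varphi(a)-{H}_{\nu}(\varphi(a),\varphi(b))={C}_{\nu}(\varphi(a),\varphi(b))$. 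Because this optimum is attained and every member of the family is dominated by $\varphi\big({C}_{\nu}(a,b)\big)$, the inequality comes out in the asserted direction.
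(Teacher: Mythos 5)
Your proof is correct and takes essentially the same route as the paper: both arguments specialize the variational formula of Theorem \ref{T.01} to scalar multiples of the unit at the optimal weights $s=\frac{(1-\nu)\varphi(b)}{\nu\varphi(a)+(1-\nu)\varphi(b)}$ and $t=1-s$, which are precisely the elements $x_0,y_0$ the paper substitutes directly, and then identify the resulting value with $C_\nu(\varphi(a),\varphi(b))$. The only difference is presentational: you obtain these weights by solving the scalar optimization, whereas the paper writes them down at the outset and verifies the computation.
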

\begin{proof}
Set ${x}_{_0}=\frac{(1-\nu)\varphi(b)}{\varphi\left(\nu a+(1-\nu)b\right)}e$ and ${y}_{_0}=\frac{\nu\varphi(a)}{\varphi\left(\nu a+(1-\nu)b\right)}e$.
Then ${x}_{_0}+{y}_{_0}=e$. Hence, by Theorem \ref{T.01}, we have
\begin{align*}
\varphi\big({C}_{\nu}(a, b)\big)&= \varphi\left(\displaystyle{\max_{x+y=e}}\left\{(1-\nu)^{-1}\left(\nu a - x^*ax\right) + \nu^{-1}\left((1-\nu)b - y^*by\right)\right\}\right)
\\& \geq \varphi\left((1-\nu)^{-1}\left(\nu a - {x}_{_0}^*a{x}_{_0}\right) + \nu^{-1}\left((1-\nu)b - {y}_{_0}^*b{y}_{_0}\right)\right)
\\& = (1-\nu)^{-1}\nu\varphi(a)+\nu^{-1}(1-\nu)\varphi(b)
\\& \qquad - \left(\frac{(1-\nu)\varphi^2(b)\varphi(a)}{\varphi^2\left(\nu a+(1-\nu)b\right)}
+ \frac{\nu\varphi^2(a)\varphi(b)}{\varphi^2\left(\nu a+(1-\nu)b\right)}\right)
\\& = (1-\nu)\nu^{-1}\varphi(b) + \nu(1-\nu)^{-1}\varphi(a) - \frac{\varphi(a)\varphi(b)}{\varphi\left(\nu a+(1-\nu)b\right)}
\\&= {A}_{\nu}\left(\nu^{-1}\varphi(b), (1-\nu)^{-1}\varphi(a)\right) - {H}_{\nu}\big(\varphi(a), \varphi(b)\big)
= {C}_{\nu}\big(\varphi(a), \varphi(b)\big).
\end{align*}
\end{proof}
Our next application of Theorem \ref{T.01} will establish a lower bounds for the $\nu$-weighted contraharmonic mean.
\begin{corollary}\label{T.007}
If $\alpha = \frac{\|b\|}{{A}_{\nu}(\|b\|, \|a\|)}$ and $\beta = \frac{\|b\|}{{A}_{\nu}(\|b\|, \|a\|)}$, then
\begin{align*}
{A}_{\nu}\left(\nu^{-1}b, (1-\nu)^{-1}a\right) - {A}_{\nu}\left(\alpha^2a, \beta^2b\right) \leq {C}_{\nu}(a, b).
\end{align*}
\end{corollary}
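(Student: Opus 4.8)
The plan is to read the inequality off directly from the variational characterization in Theorem~\ref{T.01}. Since that theorem realizes $C_\nu(a,b)$ as a maximum over all decompositions $x+y=e$, every admissible pair $(x,y)$ automatically furnishes a lower bound
\[
(1-\nu)^{-1}\left(\nu a - x^*ax\right) + \nu^{-1}\left((1-\nu)b - y^*by\right)\leq C_\nu(a,b).
\]
So the entire task reduces to exhibiting one convenient pair whose value equals the left-hand side of the asserted inequality. Because the target quantity $A_\nu(\alpha^2 a,\beta^2 b)=(1-\nu)\alpha^2 a+\nu\beta^2 b$ involves $a$ and $b$ only through multiplication by scalars, I would look for $x,y$ that are scalar multiples of the unit $e$, in the spirit of the substitutions already used in Corollary~\ref{C.004}.

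First I would set $x_0=(1-\nu)\alpha\,e$ and $y_0=\nu\beta\,e$. The key feasibility check is $x_0+y_0=e$, which amounts to the scalar identity $(1-\nu)\alpha+\nu\beta=1$; this is precisely what the normalizing denominator $A_\nu(\|b\|,\|a\|)=(1-\nu)\|b\|+\nu\|a\|$ is engineered to produce, provided the numerator of $\beta$ is $\|a\|$ (the statement's $\beta=\tfrac{\|b\|}{A_\nu(\|b\|,\|a\|)}$ should read $\beta=\tfrac{\|a\|}{A_\nu(\|b\|,\|a\|)}$, since otherwise $x_0+y_0=\tfrac{\|b\|}{A_\nu(\|b\|,\|a\|)}e\neq e$). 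With this reading one has $x_0+y_0=\tfrac{(1-\nu)\|b\|+\nu\|a\|}{A_\nu(\|b\|,\|a\|)}e=e$, so the pair is admissible.

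Next, since $x_0$ and $y_0$ are real scalar multiples of $e$, they are self-adjoint and central, whence $x_0^*ax_0=(1-\nu)^2\alpha^2 a$ and $y_0^*by_0=\nu^2\beta^2 b$. Substituting into the lower bound above gives
\[
(1-\nu)^{-1}\left(\nu a-(1-\nu)^2\alpha^2 a\right)+\nu^{-1}\left((1-\nu)b-\nu^2\beta^2 b\right)
=\nu(1-\nu)^{-1}a+(1-\nu)\nu^{-1}b-(1-\nu)\alpha^2 a-\nu\beta^2 b.
\]
Recognizing $\nu(1-\nu)^{-1}a+(1-\nu)\nu^{-1}b=A_\nu(\nu^{-1}b,(1-\nu)^{-1}a)$ and $(1-\nu)\alpha^2 a+\nu\beta^2 b=A_\nu(\alpha^2 a,\beta^2 b)$, the displayed quantity is exactly $A_\nu(\nu^{-1}b,(1-\nu)^{-1}a)-A_\nu(\alpha^2 a,\beta^2 b)$. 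As it is dominated by the maximum, that is, by $C_\nu(a,b)$, the claim follows.

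I do not expect a genuine obstacle here: once the variational formula of Theorem~\ref{T.01} is available, the proof is essentially a one-line substitution of a cleverly chosen commuting (scalar) test pair, followed by routine bookkeeping of coefficients. The only point requiring care is the feasibility constraint $x_0+y_0=e$, which is why identifying the correct scalars $(1-\nu)\alpha$ and $\nu\beta$—and hence pinning down the correct normalization of $\alpha$ and $\beta$—is the real crux of the argument.
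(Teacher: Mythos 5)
Your proof is correct and follows essentially the same route as the paper's: both test the variational formula of Theorem~\ref{T.01} with the scalar pair $x_0=(1-\nu)\alpha\,e$, $y_0=\nu\beta\,e$ and read off the bound. You also rightly flag that the statement's $\beta=\tfrac{\|b\|}{A_\nu(\|b\|,\|a\|)}$ must be a typo for $\beta=\tfrac{\|a\|}{A_\nu(\|b\|,\|a\|)}$, since otherwise $x_0+y_0\neq e$; the paper's proof silently assumes the corrected value when it claims $(1-\nu)\alpha\,e+\nu\beta\,e=e$.
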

\begin{proof}
It is easy to see that $(1-\nu)\alpha\,e+\nu\beta\,e=e$ and hence by Theorem \ref{T.01}, we have
\begingroup\makeatletter\def\f@size{10}\check@mathfonts
\begin{align*}
{C}_{\nu}(a, b)&\geq (1-\nu)^{-1}\Big(\nu a - \big((1-\nu)\alpha\,e\big)^*a\big((1-\nu)\alpha\,e\big)\Big)
+ \nu^{-1}\Big((1-\nu)b - \big(\nu\beta\,e\big)^*b\big(\nu\beta\,e\big)\Big)
\\& = (1-\nu)^{-1}\nu a -(1-\nu)\alpha^2a + \nu^{-1}(1-\nu)b - \nu\beta^2b
\\& = {A}_{\nu}\left(\nu^{-1}b, (1-\nu)^{-1}a\right) - {A}_{\nu}\left(\alpha^2a, \beta^2b\right).
\end{align*}
\endgroup
\end{proof}
In the next theorem, we present a family of lower bounds for ${C}_{\nu}(a, b)$.
\begin{theorem}\label{C.02023}
Let $\lambda\in[0, 1]$. Then
\begin{align*}
(1-\nu)^{-1}\left(\nu - \lambda^2\right)a + \nu^{-1}\left(2\lambda-\lambda^2-\nu\right)b \leq {C}_{\nu}(a, b).
\end{align*}
In particular, $2((1-\nu)^{-\frac{1}{2}}-1)a\leq {C}_{\nu}(a, b)$ and $2(\nu^{-\frac{1}{2}}-1)b\leq {C}_{\nu}(a, b)$.
\end{theorem}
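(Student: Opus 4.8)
The plan is to exploit the variational characterization furnished by Theorem \ref{T.01}, which expresses ${C}_{\nu}(a, b)$ as the maximum of the functional $(1-\nu)^{-1}(\nu a - x^*ax) + \nu^{-1}((1-\nu)b - y^*by)$ over all pairs $x, y \in \mathscr{A}$ with $x+y=e$. Since this maximum dominates the value of the functional at any single admissible pair, it suffices to exhibit one convenient choice of $x$ and $y$ that reproduces the claimed lower bound. The natural candidate is the scalar pair $x = \lambda e$ and $y = (1-\lambda)e$, which satisfies $x+y=e$ for every $\lambda \in [0, 1]$.

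First I would substitute this pair into the functional. Because $x^*ax = \lambda^2 a$ and $y^*by = (1-\lambda)^2 b$, the functional becomes $(1-\nu)^{-1}(\nu - \lambda^2)a + \nu^{-1}\bigl((1-\nu) - (1-\lambda)^2\bigr)b$, and expanding $(1-\lambda)^2$ gives $(1-\nu) - (1-\lambda)^2 = 2\lambda - \lambda^2 - \nu$. Hence the functional collapses to exactly the left-hand side of the asserted inequality, and Theorem \ref{T.01} immediately yields $(1-\nu)^{-1}(\nu - \lambda^2)a + \nu^{-1}(2\lambda - \lambda^2 - \nu)b \leq {C}_{\nu}(a, b)$.

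To obtain the two special cases I would tune the free parameter $\lambda$ so as to annihilate one of the two operator terms. Requiring the coefficient of $b$ to vanish means solving $2\lambda - \lambda^2 - \nu = 0$ on $[0, 1]$, whose admissible root is $\lambda = 1 - \sqrt{1-\nu}$; substituting $\lambda^2 = 2 - \nu - 2\sqrt{1-\nu}$ then simplifies the coefficient of $a$ to $(1-\nu)^{-1}(\nu - \lambda^2) = 2\bigl((1-\nu)^{-\frac{1}{2}} - 1\bigr)$, which is the first stated inequality. Symmetrically, requiring the coefficient of $a$ to vanish forces $\nu - \lambda^2 = 0$, i.e.\ $\lambda = \sqrt{\nu} \in [0, 1]$, and the coefficient of $b$ then reduces to $\nu^{-1}(2\sqrt{\nu} - 2\nu) = 2(\nu^{-\frac{1}{2}} - 1)$, giving the second inequality.

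There is no genuine analytic obstacle here, since the entire argument amounts to evaluating the variational formula at explicit elements rather than optimizing over operators. The only points requiring care are routine: confirming that each chosen value of $\lambda$ genuinely lies in $[0, 1]$ (both $1 - \sqrt{1-\nu}$ and $\sqrt{\nu}$ do, as $\nu \in (0, 1)$) and carrying out the two scalar coefficient simplifications without error.
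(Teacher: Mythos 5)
Your proof is correct and follows essentially the same route as the paper: evaluate the variational formula of Theorem \ref{T.01} at the scalar pair $x=\lambda e$, $y=(1-\lambda)e$, expand $(1-\lambda)^2$, and then specialize $\lambda$ to kill one coefficient. (Your pairing of $\lambda=1-\sqrt{1-\nu}$ with the bound $2\bigl((1-\nu)^{-\frac{1}{2}}-1\bigr)a$ and $\lambda=\sqrt{\nu}$ with $2\bigl(\nu^{-\frac{1}{2}}-1\bigr)b$ is in fact the correct assignment; the paper's proof lists the two values of $\lambda$ in the opposite order from the two displayed conclusions.)
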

\begin{proof}
Since $\lambda e + (1-\lambda)e=e$, by Theorem \ref{T.01}, we have
\begin{align*}
{C}_{\nu}(a, b) &\geq(1-\nu)^{-1}\left(\nu a - \left(\lambda e\right)^*a\left(\lambda e\right)\right)
\\& \qquad + \nu^{-1}\left((1-\nu)b - \left((1-\lambda)e\right)^*b\left((1-\lambda)e\right)\right)
\\& = (1-\nu)^{-1}\left(\nu - \lambda^2\right)a + \nu^{-1}\left(1-\nu - (1-\lambda)^2\right)b
\\& = (1-\nu)^{-1}\left(\nu - \lambda^2\right)a + \nu^{-1}\left(2\lambda-\lambda^2-\nu\right)b.
\end{align*}
In particular, for $\lambda=1-\sqrt{1-\nu}$ and $\lambda=\sqrt{\nu}$, we have
\begin{align*}
{C}_{\nu}(a, b) \geq \nu^{-1}\left(2\sqrt{\nu}-2\nu\right)b = 2(\nu^{-\frac{1}{2}}-1)b
\end{align*}
and
\begin{align*}
{C}_{\nu}(a, b) \geq (1-\nu)^{-1}\left(2\nu - 2 +2\sqrt{1-\nu}\right)a=2((1-\nu)^{-\frac{1}{2}}-1)a.
\end{align*}
\end{proof}
The next assertion is interesting on its own right.
\begin{corollary}\label{T.0059}
There exists a contraction $z$ in $\mathscr{A}$ such that
\begin{align*}
{A}_{\nu}(b, a)=z^*{C}_{\nu}(a, b)z.
\end{align*}
\end{corollary}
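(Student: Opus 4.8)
The plan is to exhibit the contraction $z$ explicitly as a product of square roots, so that the asserted identity becomes a one-line cancellation and the entire content of the statement collapses into a single operator inequality that is already available. First I would record that ${A}_{\nu}(b, a)=(1-\nu)b+\nu a$ and ${C}_{\nu}(a, b)$ are both positive definite, hence invertible. For ${A}_{\nu}(b, a)$ this is clear, being a positive combination of positive definite elements. For ${C}_{\nu}(a, b)$ I would invoke Theorem \ref{C.02023}, which gives $2\big((1-\nu)^{-\frac{1}{2}}-1\big)a\leq {C}_{\nu}(a, b)$; since $a$ is positive definite and $(1-\nu)^{-\frac{1}{2}}-1>0$ for $\nu\in(0, 1)$, the element ${C}_{\nu}(a, b)$ is bounded below by a positive definite element and is therefore invertible. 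Consequently the positive square roots ${C}_{\nu}(a, b)^{\frac{1}{2}}$, ${C}_{\nu}(a, b)^{-\frac{1}{2}}$, and ${A}_{\nu}(b, a)^{\frac{1}{2}}$ all exist.

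Next I would set
\begin{align*}
z:={C}_{\nu}(a, b)^{-\frac{1}{2}}\,{A}_{\nu}(b, a)^{\frac{1}{2}},
\end{align*}
so that $z^*={A}_{\nu}(b, a)^{\frac{1}{2}}\,{C}_{\nu}(a, b)^{-\frac{1}{2}}$, both factors being self-adjoint. The defining equality is then immediate: since ${C}_{\nu}(a, b)^{-\frac{1}{2}}{C}_{\nu}(a, b){C}_{\nu}(a, b)^{-\frac{1}{2}}=e$, direct cancellation yields $z^*{C}_{\nu}(a, b)z={A}_{\nu}(b, a)^{\frac{1}{2}}\,e\,{A}_{\nu}(b, a)^{\frac{1}{2}}={A}_{\nu}(b, a)$, exactly as required.

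It remains to check that $z$ is a contraction, i.e. $z^*z\leq e$. Here $z^*z={A}_{\nu}(b, a)^{\frac{1}{2}}{C}_{\nu}(a, b)^{-1}{A}_{\nu}(b, a)^{\frac{1}{2}}$, and conjugating by the positive invertible element ${A}_{\nu}(b, a)^{-\frac{1}{2}}$ shows that $z^*z\leq e$ is equivalent to ${C}_{\nu}(a, b)^{-1}\leq {A}_{\nu}(b, a)^{-1}$, which, by the order-reversing property of the map $X\mapsto X^{-1}$ on positive definite elements, is in turn equivalent to ${A}_{\nu}(b, a)\leq {C}_{\nu}(a, b)$. This last inequality is the real content of the corollary, and I would obtain it from Theorem \ref{C.02023} by the special choice $\lambda=\nu\in[0, 1]$: there the lower bound becomes $(1-\nu)^{-1}(\nu-\nu^2)a+\nu^{-1}(2\nu-\nu^2-\nu)b=\nu a+(1-\nu)b={A}_{\nu}(b, a)$, so that indeed ${A}_{\nu}(b, a)\leq {C}_{\nu}(a, b)$. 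The only genuine obstacle is recognizing which $\lambda$ in Theorem \ref{C.02023} reproduces the coefficients of ${A}_{\nu}(b, a)$; once $\lambda=\nu$ is identified, the remaining steps are routine manipulations with square roots and the operator monotonicity of the inverse.
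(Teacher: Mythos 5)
Your proposal is correct and follows essentially the same route as the paper: the same choice $z={C}_{\nu}(a,b)^{-\frac{1}{2}}{A}_{\nu}(b,a)^{\frac{1}{2}}$, with the key inequality ${A}_{\nu}(b,a)\leq {C}_{\nu}(a,b)$ obtained from Theorem \ref{C.02023} by taking $\lambda=\nu$. Your additional remarks (verifying the coefficients for $\lambda=\nu$ and justifying invertibility of ${C}_{\nu}(a,b)$) only make explicit what the paper leaves implicit.
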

\begin{proof}
By letting  $\lambda=\nu$ in Theorem \ref{C.02023}, we get
\begin{align}\label{C.02023.I.1}
{A}_{\nu}(b, a) \leq {C}_{\nu}(a, b).
\end{align}
Put $z=\left({C}_{\nu}(a, b)\right)^{-\frac{1}{2}}\left({A}_{\nu}(b, a)\right)^{\frac{1}{2}}$.
From \eqref{C.02023.I.1} it follows that
\begin{align*}
z^*z = \left({A}_{\nu}(b, a)\right)^{\frac{1}{2}}\left({C}_{\nu}(a, b)\right)^{-1}\left({A}_{\nu}(b, a)\right)^{\frac{1}{2}}\leq e,
\end{align*}
and hence $z$ is a contraction.
It is also easy to check that $z^*{C}_{\nu}(a, b)z= {A}_{\nu}(b, a)$.
\end{proof}
We close this paper with an inequality that refines the inequality \eqref{R.000401.I.1}.
\begin{theorem}\label{T.0071}
The following inequality holds:
\begin{align*}
{C}_{\nu}(a, b)\leq {A}_{\nu}\left(\nu^{-1}b, (1-\nu)^{-1}a\right)
- {H}_{\nu}\left({\left\|a^{-1}\right\|}^{-1}, {\left\|b^{-1}\right\|}^{-1}\right)e.
\end{align*}
\end{theorem}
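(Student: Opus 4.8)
The plan is to strip the statement down to a single comparison of harmonic means and then to invoke operator antitonicity of inversion. By Definition \ref{d.02} we have $C_\nu(a,b) = A_\nu(\nu^{-1}b,(1-\nu)^{-1}a) - H_\nu(a,b)$, so the arithmetic-mean term $A_\nu(\nu^{-1}b,(1-\nu)^{-1}a)$ appears on both sides of the claimed inequality and cancels. What remains to prove is therefore the reverse-harmonic comparison
\[
H_\nu\!\left({\left\|a^{-1}\right\|}^{-1}, {\left\|b^{-1}\right\|}^{-1}\right)e \leq H_\nu(a,b).
\]
All the content of the theorem lives in this one line; note also that, since the subtracted term $H_\nu({\|a^{-1}\|}^{-1},{\|b^{-1}\|}^{-1})e$ is a positive multiple of $e$, the resulting bound is indeed sharper than \eqref{R.000401.I.1}, which is the advertised refinement.

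First I would unwind both means explicitly. From the definition of $H_\nu$ we have $H_\nu(a,b) = \big((1-\nu)a^{-1}+\nu b^{-1}\big)^{-1}$, while substituting the scalars ${\|a^{-1}\|}^{-1}$ and ${\|b^{-1}\|}^{-1}$ into the scalar harmonic mean gives $H_\nu({\|a^{-1}\|}^{-1},{\|b^{-1}\|}^{-1}) = \big((1-\nu)\|a^{-1}\|+\nu\|b^{-1}\|\big)^{-1}$. Hence the target becomes
\[
\big((1-\nu)\|a^{-1}\|+\nu\|b^{-1}\|\big)^{-1}e \leq \big((1-\nu)a^{-1}+\nu b^{-1}\big)^{-1}.
\]
The crucial and elementary step is the spectral bound $c\leq\|c\|e$, valid for any positive definite $c$ because such $c$ is self-adjoint with spectrum in $[0,\|c\|]$. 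Applying this to $a^{-1}$ and to $b^{-1}$ and forming the convex combination yields
\[
(1-\nu)a^{-1}+\nu b^{-1} \leq \big((1-\nu)\|a^{-1}\|+\nu\|b^{-1}\|\big)e.
\]
Both sides are positive definite, so I invert and use that the map $t\mapsto t^{-1}$ is operator antitone on positive definite elements; this reverses the inequality into exactly the displayed target, and substituting back through the definition of $C_\nu$ closes the proof.

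I do not anticipate a genuine obstacle here: once the statement is recast as $H_\nu({\|a^{-1}\|}^{-1},{\|b^{-1}\|}^{-1})e\leq H_\nu(a,b)$, the argument is a two-line combination of the norm bound $c\leq\|c\|e$ with the antitonicity of inversion. The only point needing minor care is the bookkeeping of the scalar harmonic mean, namely verifying $H_\nu({\|a^{-1}\|}^{-1},{\|b^{-1}\|}^{-1})=\big((1-\nu)\|a^{-1}\|+\nu\|b^{-1}\|\big)^{-1}$, which is a routine substitution of $\alpha={\|a^{-1}\|}^{-1}$ and $\beta={\|b^{-1}\|}^{-1}$ into the definition of $H_\nu$ for scalars.
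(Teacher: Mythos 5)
Your proof is correct, but it takes a genuinely different and more economical route than the paper. The paper proves this theorem as yet another application of the variational formula of Theorem \ref{T.01}: starting from an arbitrary decomposition $x+y=e$, it uses ${\|a^{-1}\|}^{-1}e\leq a$ and ${\|b^{-1}\|}^{-1}e\leq b$ to bound the variational expression for ${C}_{\nu}(a,b)$ by the corresponding expression for ${C}_{\nu}\left({\|a^{-1}\|}^{-1}e,{\|b^{-1}\|}^{-1}e\right)$ plus a correction of arithmetic means, then takes the maximum and unwinds the definition of the scalar contraharmonic mean. You instead cancel the common term ${A}_{\nu}\left(\nu^{-1}b,(1-\nu)^{-1}a\right)$ against Definition \ref{d.02}, reduce the claim to the single comparison ${H}_{\nu}\left({\|a^{-1}\|}^{-1},{\|b^{-1}\|}^{-1}\right)e\leq {H}_{\nu}(a,b)$, and settle it with the spectral bound $c\leq\|c\|e$ applied to $a^{-1}$ and $b^{-1}$ followed by operator antitonicity of inversion. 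Both arguments are sound; yours is shorter, needs only the definition rather than Theorem \ref{T.01}, and isolates exactly where the content lies (a monotonicity statement for the harmonic mean), while the paper's version has the expository virtue of exhibiting the result as one more corollary of the variational characterization that organizes the whole article. The only point worth making explicit in your write-up is the antitonicity step itself, i.e.\ that $0<s\leq t$ implies $t^{-1}\leq s^{-1}$ in a $C^*$-algebra, which is standard but is the one nontrivial operator-theoretic ingredient you invoke.
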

\begin{proof}
Let $x+y=e$. Since ${\left\|c^{-1}\right\|}^{-1}e\leq c$ for any positive definite element $c\in\mathscr{A}$,
we have
\begingroup\makeatletter\def\f@size{8.2}\check@mathfonts
\begin{align*}
&(1-\nu)^{-1}\left(\nu a - x^*ax\right) + \nu^{-1}\left((1-\nu)b - y^*by\right)
\\& \quad \leq (1-\nu)^{-1}\left(\nu a - x^*\left({\left\|a^{-1}\right\|}^{-1}e\right)x\right)
+ \nu^{-1}\left((1-\nu)b - y^*\left({\left\|b^{-1}\right\|}^{-1}e\right)y\right)
\\& \quad =(1-\nu)^{-1}\left(\nu\left({\left\|a^{-1}\right\|}^{-1}e\right) - x^*\left({\left\|a^{-1}\right\|}^{-1}e\right)x\right)
+ \nu^{-1}\left((1-\nu)\left({\left\|b^{-1}\right\|}^{-1}\right)e - y^*\left({\left\|b^{-1}\right\|}^{-1}e\right)y\right)
\\& \qquad \quad +{A}_{\nu}\left(\nu^{-1}b, (1-\nu)^{-1}a\right)
- {A}_{\nu}\left(\nu^{-1}{\left\|b^{-1}\right\|}^{-1}e, (1-\nu)^{-1}{\left\|a^{-1}\right\|}^{-1}e\right)
\\& \quad \leq \displaystyle{\max_{x+y=e}}\left\{(1-\nu)^{-1}\left(\nu\left({\left\|a^{-1}\right\|}^{-1}e\right) - x^*\left({\left\|a^{-1}\right\|}^{-1}e\right)x\right)
+ \nu^{-1}\left((1-\nu)\left({\left\|b^{-1}\right\|}^{-1}\right)e - y^*\left({\left\|b^{-1}\right\|}^{-1}e\right)y\right)\right\}
\\& \qquad \quad +{A}_{\nu}\left(\nu^{-1}b, (1-\nu)^{-1}a\right)
- {A}_{\nu}\left(\nu^{-1}{\left\|b^{-1}\right\|}^{-1}e, (1-\nu)^{-1}{\left\|a^{-1}\right\|}^{-1}e\right)
\\& \quad = {C}_{\nu}\left({\left\|a^{-1}\right\|}^{-1}e, {\left\|b^{-1}\right\|}^{-1}e\right)
+{A}_{\nu}\left(\nu^{-1}b, (1-\nu)^{-1}a\right)
- {A}_{\nu}\left(\nu^{-1}{\left\|b^{-1}\right\|}^{-1}e, (1-\nu)^{-1}{\left\|a^{-1}\right\|}^{-1}e\right)
\\& \quad = {A}_{\nu}\left(\nu^{-1}{\left\|b^{-1}\right\|}^{-1}e, (1-\nu)^{-1}{\left\|a^{-1}\right\|}^{-1}e\right)
-{H}_{\nu}\left({\left\|a^{-1}\right\|}^{-1}e, {\left\|b^{-1}\right\|}^{-1}e\right)
\\& \qquad \quad +{A}_{\nu}\left(\nu^{-1}b, (1-\nu)^{-1}a\right)
- {A}_{\nu}\left(\nu^{-1}{\left\|b^{-1}\right\|}^{-1}e, (1-\nu)^{-1}{\left\|a^{-1}\right\|}^{-1}e\right),
\end{align*}
\endgroup
and so
\begingroup\makeatletter\def\f@size{8.9}\check@mathfonts
\begin{align}\label{T.0071.I.1}
(1-\nu)^{-1}\left(\nu a - x^*ax\right) + \nu^{-1}\left((1-\nu)b - y^*by\right)
\leq {A}_{\nu}\left(\nu^{-1}b, (1-\nu)^{-1}a\right)
- {H}_{\nu}\left({\left\|a^{-1}\right\|}^{-1}e, {\left\|b^{-1}\right\|}^{-1}e\right).
\end{align}
\endgroup
Since ${H}_{\nu}\left({\left\|a^{-1}\right\|}^{-1}e, {\left\|b^{-1}\right\|}^{-1}e\right)=
{H}_{\nu}\left({\left\|a^{-1}\right\|}^{-1}, {\left\|b^{-1}\right\|}^{-1}\right)e$, from the inequality \eqref{T.0071.I.1} we get
\begingroup\makeatletter\def\f@size{9}\check@mathfonts
\begin{align*}
(1-\nu)^{-1}\left(\nu a - x^*ax\right) + \nu^{-1}\left((1-\nu)b - y^*by\right)
\leq {A}_{\nu}\left(\nu^{-1}b, (1-\nu)^{-1}a\right)
- {H}_{\nu}\left({\left\|a^{-1}\right\|}^{-1}, {\left\|b^{-1}\right\|}^{-1}\right)e.
\end{align*}
\endgroup
Thus
\begingroup\makeatletter\def\f@size{8}\check@mathfonts
\begin{align}\label{T.0071.I.2}
\displaystyle{\max_{x+y=e}}\left\{(1-\nu)^{-1}\left(\nu a - x^*ax\right) + \nu^{-1}\left((1-\nu)b - y^*by\right)\right\}
\leq {A}_{\nu}\left(\nu^{-1}b, (1-\nu)^{-1}a\right)
- {H}_{\nu}\left({\left\|a^{-1}\right\|}^{-1}, {\left\|b^{-1}\right\|}^{-1}\right)e.
\end{align}
\endgroup
The desired inequality now follows from \eqref{T.0071.I.2} and Theorem \ref{T.01}.
\end{proof}
\textbf{Conflict of interest:} The author declares that he has no conflict of interest.

\textbf{Data availability:} Data sharing not applicable to the present paper as no data sets
were generated or analyzed during the current study.

\textbf{Funding:} Not applicable.
\bibliographystyle{amsplain}

\end{document}